\documentclass[11pt]{article}
\usepackage[T1]{fontenc}
\usepackage{lmodern}
\usepackage[margin=1in]{geometry}
\usepackage{amsmath,amssymb,amsthm,mathtools}
\usepackage{booktabs,array,longtable}
\usepackage{microtype}
\usepackage{xcolor}
\usepackage{xurl}
\usepackage{enumitem}
\usepackage[colorlinks=true,linkcolor=blue!50!black,citecolor=blue!50!black,urlcolor=blue!50!black]{hyperref}
\hypersetup{pdftitle={PL standardness of the four-pentachoron census sphere Q},pdfsubject={A reproducible certificate of PL standardness},pdfkeywords={four-manifold, triangulation, Cappell--Shaneson sphere, PL topology, computer-assisted proof}}
\setlist{itemsep=2pt,topsep=5pt}
\newtheorem{theorem}{Theorem}[section]
\newtheorem{proposition}[theorem]{Proposition}
\newtheorem{lemma}[theorem]{Lemma}
\newtheorem{corollary}[theorem]{Corollary}
\theoremstyle{definition}

\theoremstyle{remark}

\newcommand{\PL}{\mathrm{PL}}
\newcommand{\congpl}{\cong_{\PL}}
\newcommand{\Z}{\mathbb Z}

\newcommand{\Int}{\operatorname{int}}
\newcommand{\cl}{\operatorname{cl}}
\newcommand{\core}{\texttt{core42}}
\newcommand{\sig}[1]{\texttt{\detokenize{#1}}}
\newcommand{\file}[1]{\path{#1}}

\title{PL standardness of the four-pentachoron census sphere \(Q\)}
\author{Vance Faber\thanks{Hoquiam, WA \texttt{vance.faber@gmail.com}} \and Michael Murphy\thanks{Venice, CA}}
\date{19 October 2026}

\begin{document}
\maketitle

\begin{abstract}
We prove that the four-pentachoron homotopy \(4\)-sphere \(Q\) singled out
in the census of Burke, Burton and Spreer is PL homeomorphic to the
standard \(4\)-sphere. The key construction identifies the compact
exterior of an explicit locally flat \(2\)-sphere in an auxiliary
39-vertex model of \(Q\) with the exterior of Budney, Burton and
Hillman, then certifies its fibration by a normal cut followed by a
relative collapse. The exterior identification uses finite checked
moves, an exact regular-neighbourhood construction and a PL lemma
controlling the passage through an ideal triangulation. Cutting the
compact exterior along the specified normal hypersurface gives a
manifold that collapses onto one boundary copy of the punctured
\(3\)-torus in 209,144 elementary pairs; a marked product lemma
identifies the cut as a product. The resulting integral monodromy
realizes \(Q\) as a Cappell--Shaneson filling whose two surgery
framings are standard by Gompf's results. The fibration certificate
is independent of Issa's earlier PL-compatible constructions.
Combined with the published census, the result establishes
standardness for every homotopy \(4\)-sphere triangulated with at most
four pentachora.
\end{abstract}

\section{The census sphere and the main result}\label{sec:theorem}

Burke, Burton and Spreer identify a unique four-pentachoron triangulation
of a homotopy \(4\)-sphere which their census computations did not
connect to the standard sphere~\cite[Section~5]{BBS2026}. We use their
notation \(Q\) for this triangulation. It is specified by the Regina
isomorphism signature
\begin{equation}\label{eq:Qsig}
  \sig{eAMPcaabcddd+aoa+aAa8aQara}.
\end{equation}
Its face vector is \((2,2,8,10,4)\). A pentachoron is a \(4\)-simplex;
a generalized triangulation permits identifications between faces of
the same simplex. Throughout, PL standardness means PL homeomorphism
to the sphere with its standard PL structure.

\begin{theorem}\label{thm:main}
The four-pentachoron triangulation \(Q\) in \eqref{eq:Qsig} is
PL homeomorphic to \(S^4\).
\end{theorem}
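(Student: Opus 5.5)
The plan is to realize \(Q\) as a Cappell--Shaneson sphere and then invoke Gompf's standardness results, following the route sketched in the abstract.

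\textbf{Step 1: a 2-sphere in a subdivided model.} I would first pass from the four-pentachoron triangulation \eqref{eq:Qsig} to an auxiliary combinatorial (vertex-link-standard) model of \(Q\). Concretely, I would barycentrically subdivide and then simplify by certified Pachner moves down to a manageable model, targeting the 39-vertex model mentioned in the abstract. Each move is a PL homeomorphism, so this step only requires recording and checking the move sequence. Inside this model I would exhibit an explicit subcomplex \(K\) homeomorphic to \(S^2\), chosen so that it is locally flat. Local flatness can be checked by verifying that the link of each vertex of \(K\) in the ambient link \(S^3\) is an unknotted circle.

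\textbf{Step 2: identify the exterior.} I would take a regular neighbourhood \(N(K)\), computed exactly as a simplicial neighbourhood in the second derived subdivision, and set \(X=\cl(Q\setminus N(K))\). I would then identify \(X\) with the Budney--Burton--Hillman exterior. To do this, I would pass to an ideal triangulation of \(X\), obtained by coning off the boundary, and connect it by checked moves to their published triangulation. A PL lemma is needed here: truncating the ideal vertex recovers the compact exterior up to PL homeomorphism. The lemma must ensure that moves performed away from the ideal vertex, or ones preserving its link, are compatible with truncation.

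\textbf{Step 3: certify the fibration.} I would exhibit a normal hypersurface \(F\subset X\) representing the generator of \(H^1(X)\cong\Z\) and check that \(F\) is a punctured 3-torus. Cutting \(X\) along \(F\) gives a cobordism \(W\) with \(\partial W\supset F_+\sqcup F_-\). I would then certify that \(W\) collapses onto \(F_+\); the abstract reports a collapse of 209,144 elementary pairs, which is a mechanical but long check. A relative collapse alone gives only a regular-neighbourhood statement. A marked product lemma then upgrades it to \(W\congpl F\times I\) with the correct identification of \(F_-\): a compact manifold collapsing onto a codimension-0 part of its boundary is a collar. So \(X\) fibres over \(S^1\) with fibre a punctured \(T^3\), and I would read off the monodromy matrix \(A\in SL_3(\Z)\) from the gluing.

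\textbf{Step 4: conclude.} \(Q\) is then \(X\) filled by \(S^2\times D^2\), which is a Cappell--Shaneson sphere determined by \(A\) and a framing choice. I would check that \(A\) is conjugate to one of the matrices Gompf handles, for instance the trace-family cases covered by his and Akbulut's work. Gompf shows that both framings yield the standard \(S^4\) for such \(A\), which gives Theorem~\ref{thm:main}.

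\textbf{Main obstacle.} I expect Step 2, the exterior identification through an ideal triangulation, to be the main obstacle. Keeping the PL control rigorous there is delicate, since Pachner moves on ideal triangulations can change the truncated manifold if they touch the ideal vertex. The collapse certificate in Step 3 is large but routine by comparison.
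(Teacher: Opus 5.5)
Your outline follows the paper's route essentially step for step: a simplicial model of $Q$ containing a locally flat $2$-sphere checked on vertex-link pairs, identification of the compact exterior with the Budney--Burton--Hillman exterior through a cone/truncation lemma, a normal cut with a relative collapse and a marked product lemma giving the punctured-$T^3$ fibration, and Gompf's result that both framings are standard. The paper additionally spells out several PL steps that you leave implicit: $N(S)\cong S^2\times D^2$ via Wall's normal bundle and vanishing Euler number ($H_2=0$); Waldhausen's rigidity for $T^3$, together with translating the capped section to the zero section, to identify the bundle with the Cappell--Shaneson exterior; Gluck's theorem limiting the fillings to two; and the fact that it is the trace-zero inverse $A^{-1}$, not $A$ itself, that is conjugate to Gompf's $G_{-2}$.
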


\begin{corollary}[The census through four pentachora]\label{cor:census}
Every closed orientable generalized triangulation of a homotopy
\(4\)-sphere with at most four pentachora is PL standard.
\end{corollary}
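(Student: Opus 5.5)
The statement immediately above is the Corollary, so the plan is to deduce it from Theorem~\ref{thm:main} together with the published census of Burke, Burton and Spreer~\cite{BBS2026}.

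First, record what the census establishes. Their enumeration covers all closed orientable generalized triangulations of homotopy \(4\)-spheres with at most four pentachora. For every such triangulation except \(Q\), they connect it to the standard sphere. Concretely, they exhibit a sequence of Pachner (bistellar) moves, possibly together with edge or tetrahedron collapses, joining it to a standard triangulation of \(S^4\). Each such move is a PL homeomorphism of the underlying polyhedra. Hence every triangulation in the census other than \(Q\) is PL standard. I would cite the precise statement in~\cite[Section~5]{BBS2026}, that \(Q\) is the unique unresolved case. I would also check that their notion of ``generalized triangulation'' coincides with the one fixed after~\eqref{eq:Qsig}. In particular, a generalized triangulation of a closed \(4\)-manifold must have links of vertices that are PL \(3\)-spheres. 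Their census imposes this condition, so the underlying space carries a genuine PL structure, and PL standardness is meaningful.

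Second, the remaining case \(Q\), given by the signature~\eqref{eq:Qsig}, is exactly Theorem~\ref{thm:main}, which gives \(Q\congpl S^4\). Combining the two cases yields the corollary.

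The main obstacle is not mathematical but bookkeeping. One must make sure that the census really is exhaustive in the stated class. This includes triangulations with fewer than four pentachora and disconnected or non-minimal cases. One must also make sure that ``connected to the standard sphere'' in~\cite{BBS2026} means PL homeomorphic, not merely homeomorphic. I would address this by quoting their enumeration theorem verbatim and noting that all moves they use are PL. The argument also silently relies on the isomorphism signature identifying \(Q\) up to combinatorial isomorphism, and isomorphism preserves PL type.
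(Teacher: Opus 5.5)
Your argument is correct and matches the paper's proof: cite the Burke--Burton--Spreer census for the two-pentachoron cases and for every four-pentachoron case other than $Q$, then apply Theorem~\ref{thm:main} to $Q$. You flag, but leave to the census, the point that triangulations with an odd number of pentachora need no consideration; the paper settles this directly from the identity $5f_4=2f_3$ for a closed triangulation.
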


\begin{proof}
A closed triangulation has an even number of pentachora, since
\(5f_4=2f_3\). The published census identifies the two-pentachoron cases
and all but \(Q\) among the 647 four-pentachoron cases with the standard
PL sphere~\cite[Tables~1--2 and Section~5]{BBS2026}.
Theorem~\ref{thm:main} supplies the remaining identification.
\end{proof}

The corollary uses the published census classification. The
computations in this paper concern the remaining triangulation \(Q\);
they do not repeat that enumeration.

\paragraph{Proof strategy.}
Let \(M\) be the compact exterior obtained by truncating the ideal
vertex of the two-pentachoron model of Budney, Burton and Hillman
(BBH)~\cite{BBH2012}. Our main geometric identification is
\begin{equation}\label{eq:chain}
 \begin{gathered}
 Q\congpl K=N(S)\cup_{\partial N(S)}E,\\
 N(S)\congpl S^2\times D^2,\qquad E\congpl M.
 \end{gathered}
\end{equation}
Here \(K\) is a supplied simplicial triangulation with 39 vertices
and 832 pentachora, and \(S\subset K\) is an explicitly marked
\(2\)-sphere. A 137-move certificate identifies \(K\) with \(Q\).
The purpose of the larger model is to make the sphere, its links and
its regular neighbourhood accessible through simplex incidences.
Section~\ref{sec:sphere} constructs the decomposition, and
Section~\ref{sec:exterior} identifies the actual compact exterior.

An independent normal-cut and collapse certificate shows that \(M\)
is a PL bundle over \(S^1\) with fiber a punctured \(3\)-torus
\(T^3\setminus\Int D^3\) (Section~\ref{sec:fiber}).
Its integral monodromy identifies \(M\) with a Cappell--Shaneson
exterior for which both fillings by \(S^2\times D^2\) are standard
by Gompf's results (Section~\ref{sec:CS}). Thus \eqref{eq:chain}
proves the theorem without having to determine the surgery framing.
We reserve \(M^{\mathrm{id}}\) for the ideal presentation and \(M\)
for its compact truncation.

\paragraph{Relation to earlier work.}
The census authors already observed a Cappell--Shaneson connection
through a two-pentachoron subcomplex of \(Q\)~\cite[Section~5]{BBS2026}.
That subcomplex has singular links; an additional facet identification
produces the BBH ideal model. It is therefore not itself a compact
manifold exterior, and the additional identification does not give a
decomposition of \(Q\). The new identification here exhibits an
actual locally flat sphere in a triangulation of \(Q\), with regular
neighbourhood complement PL homeomorphic to the \emph{compact}
BBH exterior.
BBH identified their model topologically and posed the PL
identification in Problem~2.8~\cite{BBH2012}.
Issa's thesis supplies PL-compatible Cappell--Shaneson
triangulations~\cite{Issa2017}; Budney explicitly records that this
construction includes the BBH model~\cite{BudneyPublications}.
Our normal cut and relative collapse give an independent finite
certificate of that bundle structure.

The specific cut-open ball \(B_C^4\) in the census paper is a
separate object: Theorem~\ref{thm:main} does not settle its
Conjecture~2~\cite{BBS2026}. An explicit closed Pachner path from
\(Q\) to a standard sphere also remains to be constructed.

All computational assertions used below have fixed inputs and
replayable certificates. Section~\ref{sec:verification} distinguishes
those assertions from the PL theorems that interpret them, and
specifies the shared Regina kernel. Appendix~\ref{sec:hashes}
matches the mathematical notation to the data files.

\section{An auxiliary simplicial model and its marked sphere}
\label{sec:sphere}

\subsection{A certified identification with \texorpdfstring{\(Q\)}{Q}}
\label{sec:closed}

Let \(K\) be the simplicial complex specified by the facet file and
checksum in Appendix~\ref{sec:hashes}. Its face vector is
\[
 f(K)=(39,527,1738,2080,832).
\]
The certificate takes \(K\) to exactly \eqref{eq:Qsig} in 137
generalized triangulation moves: 37 edge collapses, 19 \(4\)--\(2\)
moves, 14 triangle \(2\)--\(0\) moves, 22 edge \(2\)--\(0\) moves
and 45 \(3\)--\(3\) moves. It specifies the initial simplex order,
each selected face and an explicit final relabelling. A separate
replayer rebuilds the facet complex, performs every move with
Regina's legality checks enabled and verifies the endpoint.
These checked moves establish \(K\congpl Q\); they include
generalized moves in addition to ordinary bistellar moves.

The endpoint is closed, connected, orientable and valid, with both
vertex links recognized by Regina as \(S^3\). Its fundamental group is trivial and
its first and second integral homology groups vanish. In particular,
\(H_2(K;\Z)=0\), which will determine the normal Euler number.
The identification of \(K\) with \(Q\) uses only this fixed facet
list and certificate.

\subsection{The embedded sphere}

Let \(u,v,a,b,c\) be the five vertices of \(K\) specified in
Appendix~\ref{sec:hashes}. Define \(S\) to be the suspension of
the edge cycle \(a b c a\), with suspension vertices \(u,v\).
Its six triangles are
\begin{align*}
 &(u,a,b),\quad (u,a,c),\quad (u,b,c),\\
 &(v,a,b),\quad (v,a,c),\quad (v,b,c).
\end{align*}
All six are present in \(K\), and together with their faces they
form a simplicial \(2\)-sphere with face vector \((5,9,6)\).
The ambient edge \(uv\) is not a face of \(S\).

\subsection{Local flatness and the normal neighbourhood}

For each vertex \(x\) of \(S\), the checker constructs the
pair \((\operatorname{lk}_{K}x,\operatorname{lk}_Sx)\). The ambient
link is recognized as \(S^3\). The cycle is full, and its constructed
compact knot exterior is recognized as a solid torus. Thus
the cycle is an unknot in the ambient \(S^3\). The two apical cycles have
three edges, and the three equatorial cycles have four. The five original
ambient links, in vertex order \((u,v,a,b,c)\), have
respectively 234, 225, 45, 41 and 86 tetrahedra. Each marked link
pair is constructed and checked independently. Coning a PL
homeomorphism to the standard pair \((S^3,S^1)\) and restricting
to the open cones identifies each open-star pair with
\((\Int D^4,\Int D^2)\). Indeed, for any subcomplex \(S\subset K\),
\[
 \operatorname{st}^{\circ}_K(x)\cap |S|
 =\operatorname{st}^{\circ}_S(x),
\]
because every point has a unique carrier simplex. This identity
does not require fullness. The five open stars cover \(|S|\),
proving local flatness at every point.

\begin{lemma}[The normal neighbourhood]\label{lem:normal}
Let \(S\subset\Int X\) be an oriented locally flat PL \(2\)-sphere
in an oriented PL \(4\)-manifold \(X\). If its normal Euler number
is zero, then its regular
neighbourhood, with its zero section, is PL homeomorphic to
\((S^2\times D^2,S^2\times\{0\})\).
\end{lemma}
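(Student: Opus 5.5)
The plan is to reduce the statement to the classification of PL \(D^2\)-bundles over \(S^2\). The two inputs are the fact that locally flat PL submanifolds of codimension two in dimension four admit normal block bundles, and the fact that block bundles of fibre dimension two coincide with PL disc bundles.

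\textbf{Step 1: local structure.} Local flatness means that each point of \(S\) has a neighbourhood pair PL homeomorphic to \((\Int D^4,\Int D^2)\). The previous subsection established this through open stars, and that is the form we will use. We pass to a second derived subdivision so that \(S\) is full. The regular neighbourhood \(N(S)\) is then the union of simplices meeting \(S\) in the first derived. We take a regular neighbourhood \(N_x\) of each vertex star pair inside the corresponding standard pair \((D^4,D^2)\). Each \(N_x\) meets \(S\) in a disc \(D_x\). Because the link pair is an unknotted \((S^3,S^1)\), the pair \((N_x,D_x)\) is standard, and its restriction over \(D_x\) is PL homeomorphic to \(D_x\times D^2\).

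\textbf{Step 2: block bundle.} We invoke Rourke--Sanderson, \emph{Block bundles I--II}. A locally flat PL submanifold of a PL manifold has a normal block bundle whose total space is a regular neighbourhood. By uniqueness of regular neighbourhoods, this total space is PL homeomorphic to \(N(S)\) rel \(S\). For fibre dimension \(q=2\), the map \(B\widetilde{PL}_2\simeq BPL_2\simeq BO_2\) is a homotopy equivalence, since \(PL_2\simeq O_2\) and in \(q\le 2\) block bundles are the same as PL bundles. The normal block bundle is therefore an oriented PL \(D^2\)-bundle over \(S^2\), and it is classified by an element of \([S^2,BSO_2]=\pi_1(SO_2)=\Z\). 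This integer is the normal Euler number, computed as the self-intersection \(S\cdot S\), which equals the Euler class evaluated on \([S]\).

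\textbf{Step 3: conclusion.} If the Euler number vanishes, the bundle is PL trivial. We then obtain \((N(S),S)\congpl(S^2\times D^2,S^2\times\{0\})\), where the zero section corresponds to \(S\) because the bundle isomorphism is compatible with the section.

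An alternative avoiding block bundle theory would be to build the trivialization by hand. One splits \(S=D_+\cup D_-\) along the equator. Over each hemisphere the neighbourhood is a product \(D_\pm\times D^2\) by Step 1, where hemisphere neighbourhoods are obtained by amalgamating star neighbourhoods (the unions of stars are collapsible). The gluing is then a PL automorphism of \(S^1\times D^2\) preserving \(S^1\times 0\) and fibres up to isotopy. The isotopy classes of such automorphisms form \(\Z\), given by the twist, and that twist is the Euler number. Zero twist means the gluing extends to a product.

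\textbf{Main obstacle.} The delicate point is Step 2, the passage from local flatness to a genuine normal (block) bundle together with the identification \(\widetilde{PL}_2\simeq O_2\). Codimension-two PL neighbourhoods need not exist in general, so we must use that local flatness is the hypothesis for the Rourke--Sanderson existence theorem in codimension two over a 2-dimensional base. This theorem holds because the obstructions live in low dimensions where PL agrees with smooth. I would cite it rather than reprove it, and check carefully that the Euler number of the block bundle agrees with the homological self-intersection used in the application, where it vanishes because \(H_2(K;\Z)=0\).
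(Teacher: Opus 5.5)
Your proposal is correct and follows essentially the paper's route: a normal PL \(2\)-disk bundle whose total space is a regular neighbourhood, classification of the oriented ones over \(S^2\) by \(\pi_1(SO(2))\cong\Z\) through the comparison \(PL_2\simeq O_2\), triviality when the Euler number vanishes, and regular-neighbourhood uniqueness. The only difference is the source of the disk bundle: the paper cites Wall's codimension-two normal-neighbourhood theorem directly, whereas you derive it from Rourke--Sanderson normal block bundles together with the identification of fibre-dimension-two block bundles with PL disk bundles; your Step 1 is not needed for either version.
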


\begin{proof}
The codimension-two normal-neighbourhood theorem identifies a regular
neighbourhood of a locally flat PL submanifold with its PL normal
\(2\)-disk bundle~\cite{Wall1967}. Only the existence part of Wall's
result is needed here. Oriented PL \(2\)-disk bundles over \(S^2\)
are classified by their Euler number: the comparison of PL
\(2\)-disk and \(2\)-plane bundles with real \(2\)-plane bundles
reduces the oriented clutching invariant to
\(\pi_1(SO(2))\cong\Z\)~\cite{Scott1971}; the full comparison proof
is in~\cite[Chapter~III, Theorem~2.1]{Scott1968}. Euler number zero
gives the trivial disk bundle. Regular-neighbourhood uniqueness
identifies this model with the chosen regular
neighbourhood~\cite[Chapter~3]{RS1972}.
\end{proof}

The normal Euler number is the self-intersection of \([S]\), computed by
an oriented normal push-off. Since \(H_2(K;\Z)=0\), it is zero.
Lemma~\ref{lem:normal} therefore gives \(N(S)\congpl S^2\times D^2\).

\subsection{A geometrically certified neighbourhood complement}

The vertex set of \(S\) spans one extra edge, \(uv\), in
\(K\). Subdivide that edge stellarly, fixing \(S\). Its degree is
48, so this replaces the 832 pentachora by 880 and makes \(S\) a full
subcomplex. On this subdivision define the simplexwise linear function
\(h\) to be zero at the vertices of \(S\) and one at every other vertex.
Set
\begin{equation}\label{eq:epsilon}
 N=\{h\leq\tfrac12\},\qquad
 E=\{h\geq\tfrac12\},\qquad
 J=\{h=\tfrac12\}.
\end{equation}
The full-subcomplex epsilon-neighbourhood construction is a regular
neighbourhood~\cite[Chapter~3]{RS1972}. Consequently \(N=N(S)\),
\(J=\partial N=\partial E\), and \(K\congpl N\cup_J E\).

A global pulling order triangulates the convex pieces in
\eqref{eq:epsilon}. All coordinates are rational. The construction uses
151 geometric vertices: 39 original vertices, the stellar vertex and
111 cut points. The resulting counts are
\begin{center}
\begin{tabular}{@{}lrl@{}}
\toprule
Piece & Top-dimensional simplices & Face vector\\
\midrule
\(E\) & 2517 & \((146,1933,5910,6640,2517)\)\\
\(N\) & 855 & \((116,1006,2522,2485,855)\)\\
\(J\) & 695 & \((111,806,1390,695)\)\\
\bottomrule
\end{tabular}
\end{center}
An independent geometric checker reconstructs 1383 polytopal face
posets, checks all 3372 nonzero rational simplex determinants and the
exact volumes, verifies oriented boundary cancellation and verifies the
entire interface in every dimension. This certifies a PL subdivision
of the marked pair. It does not infer the identity of the exterior from
homology or a group presentation.

\section{Identifying the BBH exterior}\label{sec:exterior}

\subsection{The compact and ideal move certificates}

The compact exterior undergoes 187 checked moves to a 216-pentachoron
triangulation with 70 real boundary tetrahedra. Cone each of these
tetrahedra to a single new vertex. The resulting 286-pentachoron
triangulation has one ideal vertex and no real boundary. A further
445 checked moves lead to
\begin{equation}\label{eq:Msig}
 M^{\mathrm{id}}=\sig{cMkabbb+aAa3blb},
\end{equation}
which is explicitly isomorphic to Regina's
\file{Example4.cappellShaneson()} and to the BBH ideal model. Thus the
exterior certificate has the form
\begin{equation}\label{eq:exteriorpath}
 E_{2517}\xrightarrow{187}E_{216}
 \xrightarrow{\text{70 boundary cones}}\widehat E_{286}
 \xrightarrow{445}M^{\mathrm{id}}_2.
\end{equation}
The subscripts record the numbers of pentachora.
The coning step is a separate construction. The number 632 counts the
187 compact and 445 ideal generalized moves, not the cones.

\begin{table}[htbp]
\centering
\caption{Moves in the exterior identification. The \(4\)--\(4\) operation
is Regina's size-preserving generalized move, replacing four pentachora
around an edge by four around a new edge.}
\label{tab:moves}
\begin{tabular}{@{}lrrr@{}}
\toprule
Operation & Compact & Ideal & Total\\
\midrule
Edge collapse & 145 & 3 & 148\\
\(4\)--\(2\) & 11 & 70 & 81\\
Triangle \(2\)--\(0\) & 17 & 49 & 66\\
Edge \(2\)--\(0\) & 7 & 49 & 56\\
Boundary shelling & 7 & 0 & 7\\
\(2\)--\(4\) & 0 & 27 & 27\\
\(1\)--\(5\) & 0 & 4 & 4\\
\(3\)--\(3\) & 0 & 223 & 223\\
\(4\)--\(4\) & 0 & 18 & 18\\
\(5\)--\(1\) & 0 & 2 & 2\\
\midrule
Total & 187 & 445 & 632\\
\bottomrule
\end{tabular}
\end{table}

The cone checker verifies every new simplex and gluing: 70 cones,
280 directed lateral gluings and 1400 vertex equations. The endpoint
isomorphism is checked on simplex and vertex maps, including all 50
gluing-vertex equations. The ideal path retains exactly one ideal
vertex. Here the finite coned polyhedron has Euler characteristic one;
its open manifold, or a compact truncation, has Euler characteristic
zero. These conventions must not be interchanged.

\subsection{Recovering the compact exterior}

The ideal moves must identify the compact exteriors, not only their
open interiors. We use the stronger data supplied by the certificate:
a finite PL equivalence of the pointed cone compactifications.

\begin{lemma}[Compact cores of pointed cone compactifications]
\label{lem:cone}
Let \(E_0,E_1\) be compact PL \(n\)-manifolds with nonempty connected
boundary. Write
\(\widehat E_i=E_i\cup_{\partial E_i}C(\partial E_i)\), with cone point
\(p_i\). A PL homeomorphism
\((\widehat E_0,p_0)\to(\widehat E_1,p_1)\) induces a PL homeomorphism
\(E_0\congpl E_1\) as manifolds with boundary. No boundary
parametrization is prescribed.
\end{lemma}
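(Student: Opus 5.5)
Given a PL homeomorphism \(f\colon(\widehat E_0,p_0)\to(\widehat E_1,p_1)\), I will recover each \(E_i\) intrinsically from the pointed polyhedron \((\widehat E_i,p_i)\) as the complement of an open regular neighbourhood of \(p_i\). Then \(f\) transports regular neighbourhoods to regular neighbourhoods, and uniqueness of regular neighbourhoods does the rest. Concretely, triangulate \(\widehat E_i\) so that \(p_i\) is a vertex and \(E_i\) is a subcomplex; this is possible because \(\widehat E_i\) is built by coning a triangulation of \(\partial E_i\). The closed cone \(C(\partial E_i)\) is then the closed star of \(p_i\) in a suitable subdivision: the cone is full along \(\partial E_i\) after one derived subdivision. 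So \(C(\partial E_i)\) is a regular neighbourhood \(R_i\) of \(p_i\) in \(\widehat E_i\), and \(\widehat E_i\setminus \Int R_i = E_i\), with frontier \(\partial E_i\).

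The key steps are as follows. First, fix a triangulation in which \(f\) is simplicial. Second, observe that \(f(R_0)\) is a regular neighbourhood of \(p_1=f(p_0)\) in \(\widehat E_1\), since regular neighbourhoods of a point are defined combinatorially (second-derived stars) and are preserved by PL homeomorphisms. Third, apply the uniqueness theorem for regular neighbourhoods \cite[Chapter~3]{RS1972} in the polyhedron \(\widehat E_1\). It gives a PL homeomorphism \(g\) of \(\widehat E_1\), fixing \(p_1\) and isotopic to the identity, with \(g(f(R_0))=R_1\). Fourth, restrict: \(g\circ f\) maps \(\widehat E_0\setminus\Int R_0=E_0\) onto \(\widehat E_1\setminus \Int R_1=E_1\), and it carries frontier to frontier, i.e. \(\partial E_0\to\partial E_1\). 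This is a PL homeomorphism of manifolds with boundary. No boundary parametrization is needed, since we only use that \(g\circ f\) carries one complement to the other. Connectedness of \(\partial E_i\) ensures that the cone has a single apex, so that \(p_i\) is the only non-manifold point whenever \(\partial E_i\) is not a sphere.

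The main obstacle is the applicability of regular neighbourhood uniqueness at a point \(p_i\) that is not a manifold point: \(\widehat E_i\) is merely a polyhedron, since the link of \(p_i\) is \(\partial E_i\). I will use the general polyhedral version of the theory. For a compact subpolyhedron \(X\) of a polyhedron \(P\), any two regular neighbourhoods are ambient isotopic keeping \(X\) fixed; Rourke--Sanderson prove this in that generality. I will also check that the cone \(C(\partial E_i)\) qualifies as a regular neighbourhood even though it is not a priori a derived star in the given triangulation. To see this, note that it equals the simplicial neighbourhood of \(p_i\) in the triangulation obtained by coning, which is the star of a vertex whose link is \(\partial E_i\). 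The star of a vertex in any triangulation is a regular neighbourhood of it, because it is collapsible to \(p_i\) and is a derived neighbourhood after subdivision. If one prefers to avoid invoking the polyhedral theorem directly, an alternative is a radial argument. Pick \(\epsilon\) small, so that the image \(f(R_0)\) contains a small cone \(R_1^{\epsilon}\) and lies inside \(R_1\). The region between \(f(\partial E_0)\) and a level set of \(R_1\) is then an h-cobordism inside the product collar \(\partial E_1\times I\). That route needs extra input, such as the annulus-type result that a region sandwiched in a product with both ends cones is itself a product. I would therefore prefer the regular-neighbourhood uniqueness route.
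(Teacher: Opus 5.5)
Your overall strategy is the paper's: apply polyhedral regular-neighbourhood uniqueness at the cone point of \(\widehat E_1\), using the same Rourke--Sanderson theorem, which allows the nonmanifold point, and then restrict to the complements of the interiors. The one genuine gap is your choice \(R_i=C(\partial E_i)\). You justify that this full cone is a regular neighbourhood of \(p_i\) by saying that the closed star of a vertex in any triangulation is a regular neighbourhood, because it collapses to the vertex. That general claim is false.

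Take \(K=[v,w]\) a single edge. The closed star of \(v\) is the whole edge, and it collapses to \(v\), but its frontier in \(|K|\) is empty. A derived neighbourhood \([v,\hat e]\) has frontier \(\{\hat e\}\). An ambient homeomorphism carrying one onto the other would have to preserve frontiers, so by the uniqueness theorem the whole edge is not a regular neighbourhood of \(v\). In a polyhedron, collapsibility does not suffice, and fullness of \(p_i\) is irrelevant. Saying the cone is ``a derived neighbourhood after subdivision'' is exactly what needs proof: in a derived subdivision of the coned triangulation near \(p_i\), the derived neighbourhood is a strictly smaller cone, not \(C(\partial E_i)\).

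The conclusion is nevertheless true here, for a reason you never invoke: \(\partial E_i\) is collared in \(E_i\). Adjoining that collar to \(C(\partial E_i)\) gives a larger cone on \(\partial E_i\) in which \(C(\partial E_i)\) is an apical subcone. A radial PL reparametrization supported in this larger cone, fixing \(p_i\) and the identity near its outer end, carries a small apical subcone onto \(C(\partial E_i)\). That small subcone is a genuine derived neighbourhood if you choose the derived points on one radial slice. Hence \(C(\partial E_i)\) is a regular neighbourhood, and your restriction step then goes through as written.

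The paper uses exactly this collar ingredient, but at the other end. It takes \(R_i\) to be a small apical subcone from the start, so that being a regular neighbourhood is immediate. The complements are then \(E_i\) with an outward collar attached, and the paper absorbs that collar into an inward collar of \(\partial E_i\). With the collar argument inserted, your proof is the paper's proof with the collar step moved from the end to the beginning.
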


\begin{proof}
Choose a small closed apical subcone \(R_i\) in each attached cone.
After subdivision, \(R_i\) is a regular neighbourhood of \(p_i\) in
the polyhedron \(\widehat E_i\). If \(f\) is the pointed PL
homeomorphism, \(f(R_0)\) and \(R_1\) are regular neighbourhoods of
the same point in \(\widehat E_1\). The regular-neighbourhood
uniqueness theorem for polyhedra gives an ambient PL homeomorphism
\(a\), fixed at \(p_1\), carrying \(f(R_0)\) onto \(R_1\)
\cite[Theorem~3.8, p.~33]{RS1972}. This theorem allows a polyhedral
ambient space, including the nonmanifold cone point here.
Restrict \(af\) to the complements of
their interiors. Each complement is \(E_i\) with a finite outward
boundary collar attached. An inward collar of \(\partial E_i\) and
a PL reparametrization of its interval absorb that outward collar.
The restricted map therefore gives the required compact PL
homeomorphism.
\end{proof}

For boundaries \(S^2\times S^1\), the cone point is the unique
nonmanifold point: its local integral homology in degree two is
\(\widetilde H_1(S^2\times S^1;\Z)=\Z\), whereas a \(4\)-manifold
point has no such local homology. Thus every PL homeomorphism of
these compactifications is automatically pointed.

\subsection{The finite--ideal edge collapse}

The ideal path has three edge collapses. At steps 204 and 337 both
endpoints are finite. At step 339, edge 45 has degree two and joins
a finite vertex to the unique ideal vertex. The end-type audit
independently recognizes all finite endpoint links as \(S^3\) and
all three edge links as \(S^2\); the replay additionally invokes
Regina's complete guarded collapse tests, including the bigon and
pillow identification conditions.

After truncating a small cone at the ideal vertex, the finite--ideal
edge is a tame arc from the compact boundary to an ordinary interior
point, radial at its boundary end. A product tube capped at the
ordinary endpoint gives an explicit regular neighbourhood: a standard
PL \(4\)-ball meeting the boundary in a \(3\)-ball. Absorbing this
neighbourhood into the cusp is a boundary-collar change; together
with the guarded bigon and pillow flattenings, it gives a PL
homeomorphism between compact truncations. Extending its boundary
restriction conically gives a pointed PL homeomorphism of the finite
compactifications, distinct from the noninjective collapse quotient
map. The other guarded ideal moves have their checked local PL
interpretations at the cone point, so Lemma~\ref{lem:cone} applies
to the full path. The end-type and pinned-source audits listed in
Appendix~\ref{sec:hashes} record the complete checks.

\begin{proposition}\label{prop:exterior}
The exact compact exterior \(E\) constructed in
Section~\ref{sec:sphere} is PL homeomorphic to the compact BBH
exterior \(M\).
\end{proposition}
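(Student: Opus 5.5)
The plan is to compose the three stages of the certificate \eqref{eq:exteriorpath} into a single pointed PL homeomorphism of cone compactifications and then apply Lemma~\ref{lem:cone}.

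\textbf{Compact stage.} The 187 compact moves of Table~\ref{tab:moves} (edge collapses, \(4\)--\(2\), the two kinds of \(2\)--\(0\) moves, and boundary shellings) are each checked by Regina with its legality guards. A guarded generalized move on a triangulated compact \(4\)-manifold with boundary is a PL homeomorphism of the underlying manifolds with boundary. For elementary boundary shellings this follows from the standard fact that removing a shellable ball meeting the boundary in a \(3\)-ball does not change the PL type. I therefore first record that \(E=E_{2517}\congpl E_{216}\) as compact manifolds.

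\textbf{Coning stage and ideal stage.} Coning the 70 real boundary tetrahedra of \(E_{216}\) to one new vertex gives exactly \(\widehat E_{216}=E_{216}\cup_{\partial}C(\partial E_{216})\), with the new vertex as cone point \(p_0\). The cone checker certifies that this is the combinatorial cone on the boundary. On the BBH side, \(M^{\mathrm{id}}\) with its ideal vertex realized as a genuine vertex is, by definition of truncation, the cone compactification \(\widehat M\) of the compact exterior \(M\). The boundary of \(M\) is connected, as is \(\partial E\): one is the link of the ideal vertex and the other is \(\partial N\cong S^2\times S^1\). Both exteriors are therefore in the setting of Lemma~\ref{lem:cone}.

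Next I would show that each of the 445 ideal moves induces a PL homeomorphism of the finite coned polyhedra that preserves the cone point. There are two cases.
\begin{itemize}
\item \emph{Moves whose support avoids the ideal vertex}, or meets it only in the interior of a cone-compatible region (bistellar \(3\)--\(3\), \(2\)--\(4\), \(4\)--\(4\), \(1\)--\(5\), \(5\)--\(1\), \(4\)--\(2\), and the \(2\)--\(0\) moves). These are PL homeomorphisms of the polyhedron that fix vertices outside the affected star. Since the ideal vertex survives throughout, by the audit that exactly one ideal vertex is retained, the cone point goes to the cone point.
\item \emph{The three edge collapses.} The two finite--finite collapses at steps 204 and 337 are PL homeomorphisms by the guarded collapse criteria, because their endpoint links are \(S^3\) and their edge links are \(S^2\). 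The finite--ideal collapse at step 339 is handled by the preceding subsection. There, the arc's regular neighbourhood is absorbed into the cusp as a boundary-collar change, giving a PL homeomorphism of compact truncations. Coning its boundary restriction yields a pointed homeomorphism of compactifications.
\end{itemize}
Since \(\partial E\cong S^2\times S^1\), pointedness is in any case automatic by the local homology remark following Lemma~\ref{lem:cone}.

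\textbf{Conclusion and main obstacle.} Composing the stages gives a pointed PL homeomorphism \((\widehat E_{216},p_0)\to(\widehat M,p_1)\). The final identification with \(\file{Example4.cappellShaneson()}\) and the BBH model is a combinatorial isomorphism, so it also preserves the cone point. Lemma~\ref{lem:cone} then gives \(E_{216}\congpl M\), and combining with the compact stage yields \(E\congpl M\).

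The main obstacle is the finite--ideal collapse. A collapse is a noninjective quotient, so it is not a homeomorphism of polyhedra; one must instead construct the pointed homeomorphism through the truncation argument and check that the bigon and pillow guards exclude degenerate identifications near the cusp. I would first verify that the edge is a tame, unknotted arc whose neighbourhood is a standard \(4\)-ball meeting the boundary in a \(3\)-ball, using the recognized \(S^2\) edge link and \(S^3\) finite endpoint link. This is what lets the collar absorption go through.
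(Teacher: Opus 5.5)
Your proposal is correct and follows essentially the same route as the paper. The compact moves give \(E\congpl E_{216}\); the cone construction and the guarded ideal path, with the step-339 finite--ideal collapse replaced by the truncation and collar-absorption homeomorphism extended conically, give a pointed PL homeomorphism of cone compactifications; and Lemma~\ref{lem:cone} with the checked endpoint isomorphism then yields \(E\congpl M\). One minor looseness: the \(2\)--\(0\) moves are pillow flattenings, not retriangulations that ``fix vertices outside the affected star'', so near the cone point they rest on the same guarded local PL interpretation that the paper invokes. This does not affect the argument.
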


\begin{proof}
The compact moves identify \(E\) with \(E_{216}\). The cone
construction and the guarded ideal path identify its cone
compactification with that of a compact truncation of
\eqref{eq:Msig}. Lemma~\ref{lem:cone} gives the compact
identification. The checked endpoint isomorphism identifies that
truncation with \(M\).
\end{proof}

\section{An independent certificate for the punctured-torus fibration}
\label{sec:fiber}

This section concerns the BBH model \(M\) itself. It establishes the
bundle structure needed for the filling argument, independently of
the auxiliary triangulation \(K\). The normal hypersurface, its
cut manifold and the relative collapse are all specified in the
supplement.

\subsection{The normal cut}

A separate certificate starts at the ideal model \eqref{eq:Msig},
truncates it to 210 pentachora and uses 105 checked moves to
identify \(M\) with a compact 36-pentachoron triangulation.
In this triangulation, the specified normal hypersurface \(F\)
has 540 standard normal coordinates. It consists of 143
tetrahedron pieces and 120 prism pieces. The certificate establishes
that it is connected, two-sided and properly embedded, with one
\(2\)-sphere boundary component. Capping this boundary and applying
four checked \(3\)-dimensional moves gives Regina's \(3\)-torus
triangulation, with signature \sig{gvLQQedfedffrwawrhh}.
The cap is a PL \(3\)-ball, so \(F\) is a punctured \(3\)-torus.
Appendix~\ref{sec:hashes} identifies the normal vector and the
coordinate conventions recorded in the source certificate.

To implement the cut, normal pieces are realized as exact rational
hyperplane sections in each original pentachoron. Their complements
give 299 convex chunks. Their global quotient can identify distinct
boundary faces of a chunk. We therefore retain the local cell
occurrences and their explicit face-identification maps. The following
lemma interprets this recorded construction.

\begin{lemma}[Identification of the certified normal cut]
\label{lem:normalcut}
Let \(M'\) be the uncut flag triangulation obtained from the recorded
rational decompositions of the compact 36-pentachoron model \(M\),
and let \(F'\) be its marked slice subspace. The verified local face
lattices and facet maps determine a PL homeomorphism of pairs
\[
 (M',F')\congpl(M,F).
\]
Omitting the recorded slice-facet gluings therefore gives the PL cut
of \(M\) along \(F\), with its two boundary copies and the specified
inverse regluing.
\end{lemma}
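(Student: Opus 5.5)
The plan is to build the PL homeomorphism simplexwise, as a union of affine maps on the convex chunks, and then check that these local maps agree on every identified face. Each original pentachoron \(\sigma\) of the 36-pentachoron model \(M\) is a rational \(4\)-simplex. The normal pieces of \(F\) inside \(\sigma\) are realized as exact hyperplane sections, which are disjoint and parallel in the normal-coordinate sense. They therefore decompose \(\sigma\) into convex chunks, and the pulling order then triangulates those chunks by flags. I will take the identity of \(\sigma\) as the map on each occurrence. Restricted to a cell of \(M'\), this is an affine isomorphism onto a convex cell of a linear subdivision of \(\sigma\). The verified local face lattices are exactly what is needed to say that these cells tile \(\sigma\): the determinants are nonzero, the volumes sum to the volume of \(\sigma\), and oriented boundary cancels away from \(\partial\sigma\). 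Hence the union of the cells in \(\sigma\) is a subdivision of \(\sigma\) that contains \(F\cap\sigma\) as a subcomplex, namely the slice faces.

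The second step is to glue. A facet gluing of \(M\) is an affine map \(\phi\colon\tau\to\tau'\) between tetrahedral faces. The recorded facet maps of \(M'\) are claimed to restrict \(\phi\) to the induced subdivisions of \(\tau\) and \(\tau'\). This is the point where the normal-coordinate matching equations enter. The normal pieces meet each face tetrahedron in normal triangles and quadrilaterals, and the counts of these agree across the gluing. So, provided the hyperplane sections are chosen compatibly on faces (for example, cut points at rational positions along edges that depend only on the edge and the piece index), \(\phi\) carries the induced subdivision of \(\tau\) to that of \(\tau'\). The checker's verification that every recorded facet map is the restriction of \(\phi\), on vertex coordinates in each dimension, then makes the simplexwise identities descend to a well-defined continuous bijection \(|M'|\to|M|\). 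This map is piecewise linear on a common subdivision, hence a PL homeomorphism. It carries slice cells onto \(F\), because each normal piece is the union of its slice cells and the gluings respect the slice marking. This gives \((M',F')\congpl(M,F)\).

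The remaining step is the cut. Since \(F\) is two-sided and properly embedded, with a subcomplex realization in \(M'\), the PL cut of \(M\) along \(F\) is by definition the complex obtained by splitting the facet gluings across \(F'\). Each slice facet of \(M'\) is shared by exactly two chunk occurrences, which lie on opposite sides by two-sidedness. Omitting those gluings therefore produces a manifold whose boundary contains two copies of \(F\), and the omitted maps are the inverse regluing. The subtle point, which the statement flags, is that the global quotient may identify distinct faces of a single chunk. For that reason I will work with occurrences and explicit face maps throughout, never with vertex-label sets, so that the argument never uses embeddedness of a chunk in the quotient.

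I expect the main obstacle to be the face-compatibility step in the second paragraph. That step requires self-gluings of a pentachoron and non-simplicial identifications to be handled by occurrence-level maps, and the exact-rational check to cover every lower-dimensional face, not only the tetrahedra. I would first try to reduce all lower-dimensional compatibility to the tetrahedral facet maps, using the fact that the face lattices are determined by the recorded posets.
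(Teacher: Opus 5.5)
Your argument depends on the recorded facet maps being restrictions of the affine gluing maps \(\phi\), so that the identity on each chunk occurrence descends to a map \(|M'|\to|M|\). The certified data do not give this, and there is no reason for it to hold. The normal pieces are realized as rational hyperplane sections separately in each pentachoron, so two pentachora sharing an edge need not put their crossings at the same points of that edge. The verified facet maps are combinatorial: carrier-preserving isomorphisms of the subdivided face posets, determined by the vertex permutation and the ordered crossing ranks.

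Your proviso that the sections be ``chosen compatibly'' is not available, because the lemma concerns the recorded decompositions. It also cannot be arranged just by fixing positions by edge and rank. A prism piece is a hyperplane section only if its six edge crossings lie in one affine hyperplane of the pentachoron, which imposes constraints on the crossing positions. Globally fixed positions generally violate them. Without geometric agreement, your chunkwise identities disagree on shared facets and do not define a map on \(|M'|\).

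The missing idea is the straightening step used in the paper. Build the homeomorphism over the skeleta of the original triangulation in increasing dimension:
\begin{enumerate}
\item Send the ordered crossings on each original edge to a fixed standard model.
\item On each original triangle, tetrahedron and pentachoron, extend the map already defined on the boundary over the normal pieces, and then over the complementary convex cells, by coning after a common subdivision. Each of these cells is a PL ball.
\end{enumerate}
Two checked facts make the choice on a face the same from every simplex containing it, including under self-gluings: the facet isomorphisms restrict correctly to all original subfaces, and the return maps around every original face orbit are the identity. This is where those checks are used; a reduction to the tetrahedral facet maps alone would not ensure it. The resulting map preserves normal-piece types, multiplicities and matchings, so it carries \(F'\) onto a realization of \(F\). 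Once that map exists, your final step of leaving the slice facets unglued is fine.

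A minor point: the determinant, volume and boundary-cancellation checks you cite belong to the regular-neighbourhood geometry of Section~\ref{sec:sphere}. The normal-cut audit instead verifies exact clipping and the complete face lattices of the chunks.
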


\begin{proof}
In each original pentachoron the recorded planes give disjoint
standard normal tetrahedron and prism sections. Exact clipping
verifies that each new plane divides exactly one existing region,
and that every complementary region is a full-dimensional convex
polytope with the recorded complete face lattice. Across an original
paired facet, its vertex permutation and the ordered crossing ranks
give a carrier-preserving isomorphism of the whole subdivision face
poset. The checker verifies the inverse maps, their restrictions to
all original subfaces, and identity return maps around every
original face orbit. Passing to flags makes these face maps PL.

Straighten the local decompositions compatibly over the original
faces in increasing dimension, beginning with the ordered crossings
on edges. Extend each prescribed boundary map over the corresponding
standard normal pieces and then the complementary ball cells by
coning after a common subdivision. The restriction and return-map
identities ensure that these choices respect all original face
identifications. In dimension four this extends over the normal
\(3\)-balls and the convex complementary \(4\)-balls. The resulting
maps descend to a PL homeomorphism of pairs, preserving normal-piece
types, multiplicities and ordered matchings. The recorded output
gluing table is exactly the flag construction. Leaving its
slice-facet pairs unglued gives the marked cut, and the recorded
inverse regluing recovers \(M'\).
\end{proof}

The construction is deterministic in the reference replay
environment: the checker compares the numbered geometry data with
the archived reference, not only the final isomorphism signature.
It also performs the uncut reconstruction, verifies validity,
orientability and connectedness, and checks
\[
 f(M')=(2994,42608,139814,166536,66336),\qquad \chi(M')=0.
\]
The PL identification rests on Lemma~\ref{lem:normalcut} and its
incidence checks. Earlier exploratory group and boundary-recognition
calculations are not rerun by the nine-stage driver and are not used
to establish that identification.

Leaving precisely the hypersurface facets unglued gives the cut
manifold \(W\), together with its two actual copies \(F_-,F_+\) of
\(F\), its side \(C\), and its regluing map
\(\kappa:F_+\to F_-\). The counts are
\begin{align*}
 f(W)&=(4375,51844,155422,174288,66336),\\
 f(F_-)=f(F_+)&=(1381,9236,15608,7752).
\end{align*}
The cut is valid, orientable and connected, with \(\chi(W)=1\).
The two fiber copies are disjoint; each meets the side in its entire
boundary, a \(2\)-sphere with face vector \((106,312,208)\).
The side has 1392 tetrahedra and the whole boundary has 16,896.
The checker verifies all 331,680 output facet slots and all 7752
inverse regluing pairs, as well as the lower-dimensional seam data.

The side is independently certified to be \(S^2\times I\).
A checked 338-move reduction takes it to 39 tetrahedra. Capping its
two spherical boundary components gives a 67-tetrahedron closed
triangulation that reaches the standard two-tetrahedron \(S^3\)
in 54 checked moves. The PL \(3\)-dimensional Schoenflies theorem
then identifies the uncapped side as a cylinder, with its two actual
boundary components. Thus
\begin{equation}\label{eq:triad}
 \partial W=F_-\cup C\cup F_+,\qquad C\congpl S^2\times I.
\end{equation}
This is marked boundary data, not just a recognition of the closed
boundary as \(T^3\#T^3\).

\subsection{A product lemma with the required markings}

\begin{lemma}[Collapse to a boundary piece]\label{lem:unmarkedproduct}
Let \(W\) be a compact PL \(n\)-manifold and let \(A\) be a compact
codimension-zero PL submanifold of \(\partial W\), with collared
frontier. If a triangulation of \(W\) collapses to \(A\), then there
is a PL homeomorphism \(W\to A\times I\) taking every \(a\in A\)
to \((a,0)\).
\end{lemma}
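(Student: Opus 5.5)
The plan is to reduce the statement to the relative regular-neighbourhood theory of Rourke--Sanderson~\cite[Chapter~3]{RS1972}, in three steps.

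\textbf{Step 1: $W$ is a regular neighbourhood of $A$.} Since the triangulation of $W$ collapses to the subcomplex triangulating $A$, simplicial collapsing shows that $W$ is a regular neighbourhood of $A$ in $W$. This uses the criterion that a compact polyhedral neighbourhood collapsing onto $A$ is a regular neighbourhood, after a derived subdivision; it does not require $A$ to lie in the interior.

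\textbf{Step 2: a collar model is also a regular neighbourhood.} Choose a collar $c\colon\partial W\times[0,1]\to W$ of the boundary. Using the collared frontier of $A$ in $\partial W$, shrink the collar near $\operatorname{Fr}A$ so that $T=c(A\times[0,\tfrac12])$ is a codimension-zero submanifold of $W$. This $T$ is PL homeomorphic to $A\times I$, with $a\mapsto(a,0)$, and it collapses onto $A$ by the product collapse. Hence $T$ is also a regular neighbourhood of $A$ in $W$.

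\textbf{Step 3: uniqueness.} Regular-neighbourhood uniqueness gives a PL homeomorphism $W\to T$ that is the identity on $A$. The relevant result is the version rel $A$, \cite[Theorem~3.8]{RS1972}, which supplies an ambient isotopy fixing $A$. Composing with $T\congpl A\times I$ gives the required map, taking each $a$ to $(a,0)$.

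The main obstacle is Step 3, where $A$ sits in $\partial W$ rather than in the interior. A regular neighbourhood of $A$ in $W$ must also meet $\partial W$ in a regular neighbourhood of $A$ in $\partial W$, so uniqueness must be applied to the pair $(W,\partial W)$. This is where the collared frontier enters.

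I would first check that $W$ and $T$ meet $\partial W$ in the same way up to a collar of $\operatorname{Fr}A$ in $\partial W$. For $W$, one has $W\cap\partial W=\partial W$, which is not a neighbourhood of $A$ alone. So I would instead verify directly that $W$ is homeomorphic to its own regular neighbourhood of $A$. Concretely, take a second derived neighbourhood $N_2$ of $A$ in $W$. The collapse $W\searrow A$ together with the simplicial neighbourhood theorem gives $W\congpl N_2$ rel $A$. Here the complement $\cl(W\setminus N_2)$ is a collar on the frontier, because it collapses from the frontier, and this is absorbed by a collar reparametrization as in the proof of Lemma~\ref{lem:cone}. The same reasoning applied to $T$ gives $T\congpl N_2$, and combining the two identifications finishes the proof.

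If the boundary version becomes delicate, the fallback is to double along $\partial W\setminus\Int A$, or equivalently to attach an outward collar $\partial W\times[-1,0]$ away from $A$. This makes $A$ meet the boundary only properly, where the standard relative uniqueness theorem applies directly.
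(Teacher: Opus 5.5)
You have a genuine gap, and it sits exactly at the obstacle you flag. None of your three attempts closes it.

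\textbf{Steps 1 and 3.} These rely on regular-neighbourhood uniqueness and the collapsing criterion. Those results are available when \(A\subset\Int W\), or when the neighbourhoods meet \(\partial W\) in regular neighbourhoods of \(A\) in \(\partial W\). As you observe yourself, \(W\) meets \(\partial W\) in all of \(\partial W\). That is a closed manifold with no free faces, so it does not collapse onto the proper subpolyhedron \(A\). Your assertion in Step~1 that no interior hypothesis is needed is therefore precisely the unproved content of the lemma. The ambient form of uniqueness you quote cannot help either: an ambient isotopy of \(W\) consists of self-homeomorphisms of \(W\), so it cannot carry \(W\) onto the proper subset \(T\).

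\textbf{Step 2.} This is false as written. \(T=c(A\times[0,\tfrac12])\) is not a neighbourhood of \(A\) at points of \(\partial A\): points \(c(y,t)\) with \(y\in\partial W\setminus A\) close to \(\partial A\) lie in \(W\setminus T\). Shrinking the collar near \(\operatorname{Fr}A\) only makes this worse. You would have to use the collared frontier to extend the collar over a slightly larger \(A^+\supset A\), then straighten the corner.

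\textbf{The second-derived repair.} This is circular. The claim that \(\cl(W\setminus N_2)\) is a collar because it collapses onto \(\operatorname{Fr}_W N_2\) is itself an instance of the lemma, since that frontier is again a compact manifold with nonempty boundary lying in \(\partial W\). You also never show that this collapse exists. For the same boundary reason, the simplicial neighbourhood theorem does not give \(W\congpl N_2\).

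\textbf{The fallback.} This puts the collar on the wrong part of \(\partial W\). Attaching an outward collar only along \(\partial W\setminus\Int A\), or doubling there, leaves \(A\) as a codimension-zero piece of the new boundary with the same frontier. \(W\) is then not even a neighbourhood of \(\partial A\) in the enlarged manifold.

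\textbf{What is missing.} The paper recalls this mechanism from Stallings' relative product theorem (Proposition~6.9.8). Attach an external collar along \emph{all} of \(\partial W\), so that \(A\), together with \(\partial A\), lies in the interior of the enlarged manifold. Then \(W\) with a thin outer collar, and an enlarged product neighbourhood, both collapse onto the same polyhedron: use \(W\searrow A\) together with the collar collapses. Both are therefore genuine interior regular neighbourhoods of that polyhedron. Relative uniqueness, fixing a smaller collar of \(A\), identifies them. Restricting and absorbing the collars then gives \(W\congpl A\times I\) with \(a\mapsto(a,0)\). Alternatively, cite the relative product theorem directly, as the paper does.
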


\begin{proof}
This is the relative product theorem in
Stallings~\cite[Proposition~6.9.8, pp.~163--164]{Stallings1967},
whose conclusion fixes the
specified boundary piece pointwise. We recall its regular-neighbourhood
mechanism to make the boundary issue explicit. Take a regular
neighbourhood of \(A\) in \(W\), which is a product neighbourhood.
For the interior comparison, attach an external collar along all of
\(\partial W\), so the whole compact piece \(A\), including its
frontier \(\partial A\), lies in the interior of the enlarged
ambient manifold. Choose a smaller collar of \(A\) which the
comparison must fix. The collapse
and the collar collapses identify the enlarged product neighbourhood
and the enlarged \(W\) as regular neighbourhoods of the same
polyhedron. Relative regular-neighbourhood uniqueness identifies
them while fixing the smaller collar; restriction and collar
absorption give the stated homeomorphism. Stallings supplies the
relative construction in full. The corresponding regular-neighbourhood
and shelling results are Rourke--Sanderson 3.24--3.26, on
pp.~38--41~\cite{RS1972}.

The relative collar construction handles the frontier \(\partial A\)
and retains the marking of \(A\). In particular, it includes the
lateral boundary when \(\partial A\ne\varnothing\).
\end{proof}

\begin{lemma}[Marked product]\label{lem:markedproduct}
In Lemma~\ref{lem:unmarkedproduct}, suppose
\(\partial W=A\cup C\cup B\), where these are compact
codimension-zero PL boundary submanifolds with collared frontiers,
\(A\cap B=\varnothing\), \(A\cap C=\partial A\),
\(B\cap C=\partial B\), and there is a PL homeomorphism
\(c:\partial A\times I\to C\) with \(c(x,0)=x\) and
\(c(\partial A\times\{1\})=\partial B\). Then
\[
 (W;A,C,B)\congpl
 (A\times I;A\times0,\partial A\times I,A\times1),
\]
with the bottom fixed pointwise. No preassigned parametrization of
the interior of \(B\) is required.
\end{lemma}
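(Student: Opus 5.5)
We begin from Lemma~\ref{lem:unmarkedproduct}, which gives a PL homeomorphism \(\Phi:W\to A\times I\) with \(\Phi(a)=(a,0)\) for every \(a\in A\). The lemma does not tell us where \(C\) and \(B\) go. The plan is to correct \(\Phi\) by a self-homeomorphism of \(A\times I\) that fixes the bottom. After the correction, \(C\) should go to \(\partial A\times I\) and \(B\) to \(A\times 1\).

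\textbf{Step 1: the boundary of the product.} Since \(\Phi\) is a homeomorphism of manifolds with boundary, it carries \(\partial W=A\cup C\cup B\) onto
\[
\partial(A\times I)=A\times0\cup\partial A\times I\cup A\times1 .
\]
It fixes \(A\) onto \(A\times0\). Hence the closure of \(\partial W\setminus A\), namely \(C\cup B\), goes onto \(\partial A\times I\cup A\times1\). Call this set \(D\); it is a copy of \(A\) with a collar on its boundary. Inside \(D\), the subset \(\Phi(C)\) is a collar-type region \(\partial A\times I\) via \(\Phi\circ c\), starting at \(\partial A\times 0\). Its complement is the disk-like piece \(\Phi(B)\).

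\textbf{Step 2: the collar comparison.} In \(D\) we now have two collars of \(\partial D=\partial A\times 0\): the standard collar \(\partial A\times I\), and the collar \(\Phi\circ c\). By PL uniqueness of collars (Rourke--Sanderson, Chapter~2), there is an ambient PL homeomorphism \(g\) of \(D\), fixed on \(\partial D\), carrying \(\Phi(c(x,t))\) to \((x,t)\). Then \(g\) carries \(\Phi(C)\) onto \(\partial A\times I\), and therefore \(\Phi(B)\) onto the complementary piece \(A\times1\), with whatever parametrization results. This freedom is why the statement requires no preassigned parametrization of \(B\).

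\textbf{Step 3: extending the correction over the product.} Extend \(g\), together with the identity on \(A\times0\), to a PL homeomorphism \(G\) of \(A\times I\). The two maps agree on \(\partial A\times 0\), so together they define a PL homeomorphism of \(\partial(A\times I)\) that is the identity on the bottom. The crux is extending this over the interior. One route is Alexander-type coning when \(A\times I\) is a ball, but \(A\) is a punctured \(T^3\), so instead I would use a radial (collar) extension. Identify \(A\times I\) with the cone-like structure obtained by collaring its boundary inward. Extend the boundary homeomorphism over the inner collar \(\partial(A\times I)\times[0,1]\) by the product, tapering to the identity.

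The tapering is the step I expect to be the main obstacle. A boundary homeomorphism need not be isotopic to the identity, so a naive taper fails. The way around it is to deform \(g\) instead. Because \(g\) is a collar-straightening, it is PL isotopic to the identity on \(D\) rel \(\partial D\): shrink the standard collar parameter, in the style of the Alexander trick along the \(I\)-coordinate of the collar. Run this isotopy on the inner collar of \(A\times1\cup\partial A\times I\) inside \(A\times I\), keeping \(A\times 0\) fixed. This gives \(G\).

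Finally, \(G\circ\Phi\) is the required homeomorphism of triads, and it fixes the bottom pointwise. Lemma~\ref{lem:unmarkedproduct} can alternatively be applied directly with the collar data, since Stallings' relative theorem already handles frontiers; I would cite that to shortcut Step 3 if available.
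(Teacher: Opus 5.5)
Your architecture is the paper's. You obtain $\Phi$ from Lemma~\ref{lem:unmarkedproduct}, compare the collars $\Phi(C)$ and $\partial A\times I$ of $\partial D$ inside $D=(\partial A\times I)\cup(A\times 1)$, extend the correction by the identity on $A\times 0$, taper it over a boundary collar of $A\times I$, and compose. You also correctly identify the crux: the correction $g$ must be isotopic to the identity rel $\partial D$. Both $A\times0$ and $D$ include into $A\times I$ as homotopy equivalences, so any extension fixing the bottom forces $g$ to act trivially on $H_1(D)$.

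Your reason for that crux, however, is wrong. Being a collar-straightening constrains $g$ only on the collar. Let $\tau$ be a PL homeomorphism of $D$ supported in the interior of $A\times 1$, away from $\partial A\times1$, acting nontrivially on $H_1(D)\cong\Z^3$; for the punctured $T^3$, take a nontrivial orientation-preserving linear automorphism of $T^3$ isotoped to fix a ball. Then $\tau\circ g$ straightens the collar equally well, yet by the homology argument it has no extension fixing the bottom.

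Shrinking the collar parameter does not help even for maps supported inside the collar. Conjugating by $t\mapsto st$ only concentrates a boundary-parallel twist in thinner collars, and its displacement does not tend to zero as $s\to0$. The Alexander trick needs a cone point to shrink toward.

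The repair is the stronger form of the theorem you already invoke. Ambient collar uniqueness, which the paper cites from Rourke--Sanderson, Chapter~4, yields a PL ambient isotopy of $D$, fixed on $\partial D$, carrying one collar onto the other. Its hypothesis holds because $\Phi(B)$ and $A\times1$ are manifolds with boundary, so both collars extend past their inner frontiers. Take $g$ to be the final map of this isotopy, extend the whole isotopy by the identity on $A\times 0$, and taper it across a collar of $\partial(A\times I)$ (Rourke--Sanderson 3.22(ii)). This is exactly the paper's argument.

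Collar uniqueness aligns the collars setwise. Your pointwise claim $\Phi(c(x,t))\mapsto(x,t)$ asks for more than the theorem directly gives. Setwise alignment is all the triad conclusion needs, and the top $\Phi(B)\to A\times1$ then follows as the complement.
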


\begin{proof}
Use Lemma~\ref{lem:unmarkedproduct} to obtain
\(h:W\to X=A\times I\), fixed on the bottom. Put
\[
 D=\cl\bigl(\partial X\setminus(A\times0)\bigr)
   =(\partial A\times I)\cup(A\times1).
\]
Both \(h(C)\) and \(\partial A\times I\) are closed collars of
\(\partial D=\partial A\times0\) in \(D\). Their complements,
\(h(B)\) and \(A\times1\), are PL manifolds with boundary, so
these collars extend past their inner frontiers. Ambient collar
uniqueness gives a PL isotopy of \(D\), fixed on \(\partial D\),
carrying the first collar onto the second
\cite[Chapter~4, Exercise~(4) following Theorem~4.24]{RS1972}.
Extend this isotopy by the identity on \(A\times0\). Its endpoint
extends over a boundary collar of \(X\): taper the isotopy from
its endpoint at the boundary to the identity at the inner edge
of the collar, and use the identity beyond it. This is a PL
homeomorphism of \(X\), fixed on the bottom
\cite[Proposition~3.22(ii)]{RS1972}. Composing with \(h\) aligns
the side. The complementary top is then aligned as well.
\end{proof}

The essential separating sphere in \(T^3\#T^3\) is also unique up to
isotopy~\cite[p.~1317]{HatcherWahl2005}, but the marking in
Lemma~\ref{lem:markedproduct} does not use that fact.

\subsection{The collapse and regluing certificates}

Two independently replayed logs give
\begin{equation}\label{eq:collapse}
  W\searrow F_-,\qquad W\searrow F_+,
\end{equation}
each in 209,144 elementary collapses. In each log the number of
pairs whose upper simplex has dimension \(4,3,2,1\) is respectively
66,336, 100,200, 39,614 and 2994. These counts account exactly for
\(f(W)-f(F_-)\), with \(f_4(F_-)=0\): a pair of upper dimension
\(d\) removes one \(d\)-simplex and one \((d-1)\)-simplex.
The replayer independently
reconstructs the face poset, checks its recorded checksum,
verifies the embedded target subcomplex and checks every free-face
pair. The upper face is maximal among live faces, the lower has
the required unique coface, and no face of the target is removed.
The final live complex is exactly the specified copy
of \(F\). These are actual elementary collapses, not merely
discrete Morse counts. Only one of the two logs is needed for the
product conclusion; the other supplies a second check with the
opposite preserved end.

The proper two-sided normal embedding supplies the local product
charts at the cut and its seams, hence the collared-frontier
hypotheses. Parametrize the certified side cylinder with its
recorded bottom; precompose the sphere coordinate by the inverse
of the bottom restriction to arrange \(c(x,0)=x\).
Apply Lemma~\ref{lem:markedproduct} to
\((W;F_-,C,F_+)\), using \eqref{eq:triad}. Let \(h_-\) and
\(h_+\) be the restrictions of the resulting product
homeomorphism to the bottom and top, with values in \(F\). Regluing
the recorded pair of fiber copies gives the mapping torus of
\begin{equation}\label{eq:return}
  \varphi=h_-\circ\kappa\circ h_+^{-1}:F\longrightarrow F.
\end{equation}

\begin{proposition}\label{prop:fibration}
The compact BBH exterior \(M\) is a PL bundle over \(S^1\) with
fiber a punctured \(3\)-torus, and the certified embedded
hypersurface \(F\) is a fiber.
\end{proposition}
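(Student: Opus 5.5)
The plan is to assemble the proposition from the pieces already certified in this section, in three steps.

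\emph{Step 1: the cut is a marked product.} By Lemma~\ref{lem:normalcut}, omitting the recorded slice-facet gluings of \(M'\congpl M\) gives the PL cut \(W\) of \(M\) along \(F\). It comes with boundary copies \(F_\pm\) and the inverse regluing \(\kappa:F_+\to F_-\). Regluing \(W\) by \(\kappa\) recovers \(M\), with the image of \(F_\pm\) equal to \(F\). Next, by \eqref{eq:triad} the boundary decomposes as \(\partial W=F_-\cup C\cup F_+\) with \(C\congpl S^2\times I\). The collapse log in \eqref{eq:collapse} gives \(W\searrow F_-\). The collared-frontier hypotheses come from the proper two-sided normal embedding. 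We then apply Lemma~\ref{lem:markedproduct} to \((W;F_-,C,F_+)\), after normalizing \(c\) so that \(c(x,0)=x\). This gives a PL homeomorphism
\[
H:(W;F_-,C,F_+)\to(F\times I;F\times0,\partial F\times I,F\times1).
\]
Its bottom restriction is \(h_-\) and its top restriction is \(h_+\), where we identify \(F_-\) with \(F\) via the certified normal-surface identification.

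\emph{Step 2: identify \(M\) with a mapping torus.} The quotient of \(W\) by \(\kappa\) is PL homeomorphic to the quotient of \(F\times I\) that glues \((y,1)\) to \((\varphi(y),0)\), with \(\varphi\) as in \eqref{eq:return}. Transporting through \(H\), a point \(x\in F_+\) goes to \((h_+(x),1)\), and its partner \(\kappa(x)\in F_-\) goes to \((h_-\kappa(x),0)\). So the gluing is exactly \(y\mapsto\varphi(y)\) for \(y=h_+(x)\). Because \(H\) is a PL homeomorphism of triads and the identification is along PL boundary pieces, the quotient map descends to a PL homeomorphism \(M\congpl T_\varphi\).

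\emph{Step 3: conclude the bundle structure.} The mapping torus \(T_\varphi\) fibres PL over \(S^1=I/\{0\sim1\}\), and the fibre over \(0\) is the image of \(F\). Since \(F\) was shown to be a capped-off \(T^3\), it is \(T^3\setminus\Int D^3\). This gives both claims of the proposition.

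I expect the main obstacle to be the compatibility bookkeeping in Step~1. The marking \(h_-\) must really be the identity on the copy of \(F\), which is why Lemma~\ref{lem:markedproduct} fixes the bottom pointwise. Also, \(\kappa\) must carry \(\partial F_+\) to \(\partial F_-\) consistently with the side cylinder, so that the monodromy is a homeomorphism of \(F\) as a manifold with boundary and the boundary \(S^2\times S^1\) is the mapping torus of \(\varphi|_{\partial F}\). I would check this from the recorded seam data, and if needed compose with a collar isotopy.
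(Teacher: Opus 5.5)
Your proposal is correct and follows the paper's own route: Lemma~\ref{lem:normalcut} for the cut, the triad \eqref{eq:triad}, the collapse \(W\searrow F_-\), Lemma~\ref{lem:markedproduct} after arranging \(c(x,0)=x\), and regluing by \(\kappa\) to obtain the mapping torus of \(\varphi=h_-\circ\kappa\circ h_+^{-1}\). The boundary bookkeeping you flag at the end is automatic, since \(\kappa\) and \(h_\pm\) are homeomorphisms of the fiber copies as manifolds with boundary, so \(\partial M\) is simply the mapping torus of \(\varphi|_{\partial F}\).
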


The product homeomorphism and the full simplicial return map are
not themselves output by the collapse replayer. Their existence,
with the required end markings, is the conclusion of the two
explicit PL lemmas. This distinction is relevant to subsequent
attempts to turn the proof into a closed bistellar path.

\section{Cappell--Shaneson fillings and proof of the theorem}
\label{sec:CS}

\subsection{The integral monodromy}

The group calculation of BBH, Proposition~2.5, identifies the
kernel of the primitive map to \(\Z\) as \(\Z^3\), with action
\begin{equation}\label{eq:A}
 A=\begin{pmatrix}0&0&1\\1&0&0\\0&1&-1\end{pmatrix},\qquad
 \det A=1,\quad\det(A-I)=-1,\quad
 \det(tI-A)=t^3+t^2-1.
\end{equation}
In a commuting basis \(\alpha,\beta,\gamma\), the action is
\(\alpha\mapsto\beta\), \(\beta\mapsto\gamma\),
\(\gamma\mapsto\alpha\gamma^{-1}\).
BBH write the characteristic polynomial using \(\det(A-tI)\),
which has the opposite sign~\cite[Proposition~2.5]{BBH2012}.

The fibration in Proposition~\ref{prop:fibration} gives a
surjection \(\pi_1(M)\to\Z\). Since \(H_1(M;\Z)=\Z\), it is
the primitive homomorphism up to sign. Its fiber group is the
\emph{entire} kernel, so the integral action is \(A\), up to
integral change of basis and inversion of the base generator.
There is no inference from a rational characteristic polynomial
or a possibly finite-index sublattice. Choose the integral basis
and the orientation of the base circle so that this action is \(A\).

Cap the spherical boundary of the fiber by a \(3\)-ball and extend
\(\varphi\) over that ball by coning its boundary map. Denote the
extension by \(\bar\varphi:T^3\to T^3\). It has integral action
\(A\) and is orientation preserving, since the total bundle is
orientable. Let \(L_A\) be the linear torus automorphism induced by
\(A\). The map \(L_A^{-1}\bar\varphi\) acts trivially on
\(\pi_1(T^3)=\Z^3\), and is therefore homotopic to the identity
by asphericity. The closed \(T^3\) is Haken: it is irreducible
and contains an incompressible torus. Waldhausen's PL isotopy
theorem makes this composite isotopic to the identity, so
\(\bar\varphi\) is isotopic to \(L_A\)
\cite[Theorem~7.1, pp.~80--81, and Corollary~7.5, p.~85]{Waldhausen1968}.

The coning extension fixes the cone point and preserves the smaller
concentric cone neighbourhoods. Thus the capping balls form a PL
\(3\)-disk-bundle regular neighbourhood of their cone-point section;
deleting its interior recovers \(M\). The monodromy isotopy gives a fiber-preserving PL
homeomorphism to the linear torus mapping torus. There, any PL
section can be moved to the zero section by fiberwise translation.
In the model \((x,1)\sim(Ax,0)\), write the section as
\([s(t),t]\), so \(s(0)=A s(1)\). The map
\([x,t]\mapsto[x-s(t),t]\) is well-defined and takes it to
the zero section. Torus addition is PL, so this fiberwise
translation is a PL homeomorphism. Regular-neighbourhood
uniqueness then identifies
the complements. Thus \(M\) is, in the PL category, the
punctured-torus exterior in the Cappell--Shaneson construction
for \(A\). This argument does not assert uniqueness of a
parametrized section or of a relative-boundary monodromy.

\subsection{The filling and standardness}

The PL fillings of an \(S^2\times S^1\) boundary by
\(S^2\times D^2\) are represented by two attaching classes,
modulo maps extending over the filling. Gluck's homeotopy
calculation gives the factor reflections and the sphere twist
as generators~\cite[Theorem~5.1]{Gluck1962}. These have PL
representatives, and Hamstrom's three-dimensional comparison
theorem upgrades topological isotopies between PL maps to PL
isotopies~\cite[Section~2.1 and Theorem~3.1.2]{Hamstrom1974}:
apply injectivity on path components to the quotient of the two
maps in the topological identity component. The factor reflections
extend over \(S^2\times D^2\), leaving the identity and the
Gluck twist~\cite[Lemmas~15.1 and~15.3]{Gluck1962}; see also
Hatcher's smooth formulation~\cite{Hatcher1981}. Consequently
every filling of \(M\) by \(S^2\times D^2\), and in particular
the closed manifold \(N(S)\cup E\), is one of the two
Cappell--Shaneson spheres \(\Sigma_A^\varepsilon\),
\(\varepsilon\in\{0,1\}\).

For completeness, the standardness reduction can be checked
with small integer matrices. Let
\[
 B=A^{-1}=\begin{pmatrix}0&1&0\\1&0&1\\1&0&0\end{pmatrix},
 \qquad
 P=\begin{pmatrix}1&-1&1\\0&1&-1\\0&-1&2\end{pmatrix}.
\]
The inverse monodromy \(B\) has trace zero, as in the description
of this exterior in~\cite[Section~5]{BBS2026}.
Then \(\det P=1\), \(\det(B-I)=1\), and
\begin{equation}\label{eq:G}
 PBP^{-1}=G_{-2},\qquad
 G_m=\begin{pmatrix}0&1&0\\0&1&1\\1&0&m+1\end{pmatrix}.
\end{equation}
These identities are checked exactly in the supplement.
Inversion and integral conjugacy preserve the relevant
unordered pair of surgery spheres. Gompf's discussion at the
start of Section~3 and Example~3.1(a) show that both framings
for every \(G_m\), hence for \(G_{-2}\), give the standard
smooth \(S^4\)~\cite{Gompf2010}. A diffeomorphism between the
compatible smooth manifolds gives the required PL conclusion.

\begin{proof}[Proof of Theorem~\ref{thm:main}]
The 137-move certificate in Section~\ref{sec:closed} identifies
\(Q\) with \(K\). Section~\ref{sec:sphere}
decomposes \(K\) as \(N(S)\cup E\), with
\(N(S)\congpl S^2\times D^2\).
Proposition~\ref{prop:exterior} identifies the actual compact
exterior with \(M\), and Proposition~\ref{prop:fibration}
establishes its PL punctured-torus fibration. The integral
monodromy and capping argument identify its two fillings with
the Cappell--Shaneson pair for \(A\). Both are standard by
\eqref{eq:G} and Gompf's result. Therefore
\(Q\congpl K\congpl S^4\).
\end{proof}

\section{Verification and the trusted components}\label{sec:verification}

The computational supplement is archived with a version-specific DOI,
\url{https://doi.org/10.5281/zenodo.23479543}\cite{FaberMurphyCert2026}.
It contains the input triangulations, move sequences,
geometric constructions and independent replay programs. Its exact
identity and file concordance are given in Appendix~\ref{sec:hashes}.
The supplied archive is unchanged from the verified version.

Stages 2--9 of its driver supply the computational assertions in
Theorem~\ref{thm:main}. Stage 2 identifies \(K\) with \(Q\);
stages 3--8 identify the exterior of \(S\) with the BBH model;
stage 9 certifies the bundle and matrix data independently of \(K\).
Stage 1 verifies the additional application in
Appendix~\ref{sec:application}. It is not a premise of the main
theorem. The distributed driver runs all nine stages, retaining
their original order.

The reference environment is Python 3.12.14 with the Regina Python
distribution 7.4.1; the engine's \file{versionString()} reports 7.4.
The workers use POSIX resource limits. With Regina importable, run
the following from the extracted certificate directory, choosing a
fresh output directory outside the archive:
\begin{quote}\small
\verb|python REPLAY_ALL.py --output-directory /tmp/q-replay|
\end{quote}
For a separate Regina installation, add
\verb|--regina-path /absolute/path/containing/regina|.
Python's \file{-O} and \file{-OO} modes are prohibited because the
checkers use assertions. The driver executes all nine stages in
Table~\ref{tab:replay} and fails on a rejected move, a geometric
or incidence discrepancy, a checksum mismatch or an incorrect
endpoint. The nine worker stages in the archived reference run
took approximately 108 seconds, excluding the initial checksum and
source-consistency checks. Runtime is machine dependent.

\begin{table}[htbp]
\centering
\caption{The nine outer replay stages and their proof roles. The last
stage contains five internal checks.}
\label{tab:replay}
\begin{tabular}{@{}rp{3.1cm}p{9.0cm}@{}}
\toprule
& Stage & Certified conclusion\\
\midrule
1 & Additional application & The appendix's 42-vertex triangulation reaches \(Q\).\\
2 & Identification with \(Q\) & The specified \(K\) reaches exactly \(Q\).\\
3 & Local flatness & All five vertex-link pairs are unknotted.\\
4 & Exact geometry & \(N\) and \(E\) give the certified regular-neighbourhood decomposition.\\
5 & Compact moves & \(E_{2517}\) reaches \(E_{216}\) by 187 legal moves.\\
6 & Boundary cones & The 70-cone construction has exactly the claimed gluings.\\
7 & Ideal moves & The 445-move ideal path reaches the BBH model.\\
8 & End audit & The three collapse endpoint types and edge links are as stated.\\
9 & Fiber bundle data & The source identification, normal cut, both collapses, marked side and exact matrix identities pass.\\
\bottomrule
\end{tabular}
\end{table}

\paragraph{Finite certificates.}
The closed and exterior move logs specify face indices in the
current triangulation; they are replayed in order, with no hidden
intermediate canonical relabelling. The coning and geometric
checkers reconstruct their objects independently. The normal-cut
audit checks carrier-preserving face identifications and regluing,
and the collapse replayer reconstructs all face incidences and
checks every elementary collapse. The latter checks 5,466,332
local face occurrences when validating the embedded complex and
its preserved target subcomplexes.

\paragraph{Shared software kernel.}
Independence of checker implementations is not independence of
all mathematical software. Generalized move legality, exact
\(3\)-manifold recognition, the BBH source's ideal truncation,
normal-hypersurface materialization and the \(3\)-dimensional
spherical capping routines use Regina
7.4.1~\cite{Regina}. The guarded moves enforce sufficient,
move-specific combinatorial conditions for PL preservation:
identification and bigon/pillow tests for edge collapses, prescribed
degree-two face configurations and identification tests for \(2\)--\(0\) moves,
boundary-incidence tests for shellings, and the prescribed link
configuration for \(4\)--\(4\) moves. The audited source revision is
recorded in Appendix~\ref{sec:hashes}.
In particular, the finite--ideal collapse relies on the full
guarded implementation in
\file{engine/triangulation/dim4/simplify.cpp}; its independent
endpoint-link audit supplements that implementation. The small
link and knot-exterior recognition checks invoke Regina afresh
and do not claim separate logged certificates for every internal
recognition operation. The geometric arithmetic is exact rational
arithmetic. This is a replayable computer-assisted proof, not a
formal proof in a proof assistant.

\paragraph{Theorems, not machine assertions.}
The following implications are supplied by mathematics:
local flatness and zero normal Euler number give the product
normal neighbourhood (Lemma~\ref{lem:normal}); the pointed finite
cone equivalence gives a compact exterior equivalence
(Lemma~\ref{lem:cone}); the verified local chunk data identify the
normal cut (Lemma~\ref{lem:normalcut}); the collapse and marked side give the
product triad (Lemmas~\ref{lem:unmarkedproduct}
and~\ref{lem:markedproduct}); \(3\)-torus rigidity identifies the
closed monodromy; the boundary mapping-class calculation leaves
two filling classes; and Gompf establishes standardness for both.
The checkers do not replace any of these theorems.

\paragraph{Computer assistance.}
ChatGPT (OpenAI) and Claude Fable (Anthropic) assisted with
mathematical exploration, development and review of computational
code, reference searches, and manuscript drafting and revision. The
computational assertions are supported by the archived inputs and
deterministic replay programs; the PL interpretations are stated as
lemmas and supported by the cited sources.

\section{Further constructive questions}\label{sec:open}

An explicit closed Pachner path from \(Q\) to a standard sphere
triangulation remains to be constructed. Burke, Burton and Spreer
prove that any such path must reach at least twelve
pentachora~\cite[Proposition~4]{BBS2026}. PL standardness guarantees
bistellar equivalence after passing to genuine simplicial
triangulations~\cite{Pachner1991,Lickorish1999}. The present proof
uses a geometric decomposition and PL identifications, and supplies
no useful upper bound on the length or height of a closed path.

The exterior certificate passes from \(E_{216}\) through its coned
ideal presentation. No direct compact move sequence between
\(E_{216}\) and the 36-pentachoron model of \(M\) was obtained;
the ideal passage supplied the successful identification. Finding
a compact sequence could help turn the decomposition into a closed
Pachner path.

The choice of surgery framing could be determined after fixing a
boundary identification with the Cappell--Shaneson model. Both
choices yield the standard ambient sphere, but this additional
marking would specify the surgery description. Similarly, the
product homeomorphism supplied by the relative collapse could be
converted into an explicit simplicial product map and a recorded
return map. These constructions would connect the finite
certificates more directly to geometric standardness proofs.

The marked \(2\)-knot \(S\subset K\congpl S^4\) is nontrivial:
its exterior group \(\Z^3\rtimes_A\Z\) is noncyclic, whereas
the standard \(2\)-unknot has exterior group \(\Z\).
Its relation to the particular cut-open pieces exhibited in the
census paper remains to be made explicit. The pair \((K,S)\)
provides concrete geometric data with which to investigate that
relationship and to develop simplification moves that preserve
the knot exterior.

\section*{Acknowledgements}
The use of ChatGPT (OpenAI) and Claude Fable (Anthropic) for this work
is detailed in Section~\ref{sec:verification}.

\clearpage
\appendix
\small
\section{Reproducibility data and archive concordance}\label{sec:hashes}

The archive retains the filenames used when its certificates were
generated. In particular, the names \file{K347}, \file{current347}
and \file{floor2}, and the label \file{g2=347}, refer to the auxiliary
triangulation \(K\) of this paper. The last label records
\(g_2(K)=f_1(K)-5f_0(K)+15=347\); this statistic plays no role in
the proof. These names introduce no additional mathematical objects
or assumptions.

\paragraph{The auxiliary triangulation and sphere.}
The facet list for \(K\) is
\file{closed_anchor/floor2/floor2_source_facets.json};
its 137-move certificate is
\file{closed_anchor/floor2/floor2_Q_certificate.json}.
The geometric construction uses the matching data in
\file{geometry/source_checkpoint.json} and
\file{geometry/source_facets.json}.
The five marked vertices have the following identifiers:
\[
\begin{array}{c|rrrrr}
\text{paper notation} & u & v & a & b & c\\ \hline
\text{facet-file label} & 0 & 3133 & 964 & 1628 & 1711
\end{array}
\]
These labels are arbitrary vertex identifiers. The canonical
facet-set SHA-256 for \(K\) is
\begin{quote}\small
\nolinkurl{4138e1e62b27d003f0949c7f8a49c1a10de91b5781394ad988d32ea06471d39d}.
\end{quote}

To compute a canonical facet-set checksum, sort the vertex labels
within each facet, sort the resulting facet rows lexicographically,
and encode the array as compact JSON with comma and colon
separators, in UTF-8 without a final newline. Hash those bytes.
This is a checksum of the facet array; a containing JSON file may
also carry metadata and have a different byte checksum.

\paragraph{The compact model and the normal vector.}
The compact source and move data are in
\file{bbh_fiber_audit/source_anchor/bbh_small_model.json}.
The vector of \(F\) is the field \file{normal_vector_standard} in
\file{bbh_fiber_audit/source_anchor/candidate26_recognition.json},
where \file{normal_index} is 26.
The source identification records a
canonical reload after move 55 and the final canonical coordinate
system used for the 540-entry normal vector.

\paragraph{The fixed supplement.}
The archive supplied with the manuscript is
\file{core42_standardness_certificate_20261019.zip}.
It contains 123 files, has size 5,603,792 bytes, and has SHA-256
\begin{quote}\small
\nolinkurl{acb0d895605b0cdd15b99a8df52cb47a8c9543bc1b64d15e1cbeed20137db3f2}.
\end{quote}
The manifest checks every payload. The top-level \file{REPORT.txt}
records the completed chain; earlier working reports remain as
historical records. The archive's filename reflects the additional
application below. Its first replay stage and final comparison of
closed endpoints remain part of the distributed driver.
The public artifact release is archived at
\url{https://doi.org/10.5281/zenodo.23479543}.
The corresponding GitHub repository is
\url{https://github.com/ThePigLA/four-pentachoron-sphere-Q}.

\paragraph{The audited Regina source.}
The revision used for the source audit is
\begin{quote}\small
\nolinkurl{219d658e5e51cc003807fc871fb3534a448a972f}.
\end{quote}
The edge-collapse implementation is
\file{engine/triangulation/dim4/simplify.cpp}.
The supplement's end-type and source audits are
\file{theory/IDEAL_COLLAPSE_END_TYPES.json} and
\file{theory/REGINA_PINNED_SOURCE.json}.

\section{Application to an additional simplicial triangulation}
\label{sec:application}

The supplement also includes a 42-vertex simplicial triangulation,
labelled \(\core\), with face vector
\((42,537,1748,2085,834)\). This is an additional application of
Theorem~\ref{thm:main}; it is not used in its proof.

\begin{corollary}
The specified triangulation \(\core\) is PL homeomorphic to \(Q\)
and hence to \(S^4\).
\end{corollary}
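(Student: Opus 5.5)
The plan is to reduce the corollary to Theorem~\ref{thm:main} through a single finite certificate: a checked sequence of generalized triangulation moves taking \(\core\) to exactly the isomorphism signature \eqref{eq:Qsig}. This is Stage~1 of the replay driver in Table~\ref{tab:replay}. It is the same kind of certificate as the 137-move certificate of Section~\ref{sec:closed}, which identifies \(K\) with \(Q\).

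First, I will fix the input. The replayer loads the specified facet list of \(\core\) and checks its canonical facet-set checksum, computed by the convention of Appendix~\ref{sec:hashes}. It then verifies the face vector \((42,537,1748,2085,834)\) and confirms that the complex is a closed, connected, orientable and valid simplicial \(4\)-manifold.

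Second, the moves are replayed in order from the recorded initial simplex order, with Regina's legality checks enabled. The move types are the same as in Section~\ref{sec:closed}: edge collapses, \(4\)--\(2\), triangle and edge \(2\)--\(0\), and \(3\)--\(3\) moves. For the PL interpretation I rely on the point already made in Section~\ref{sec:verification}. Each guarded move enforces sufficient combinatorial conditions for PL preservation, namely identification and bigon/pillow tests for edge collapses and degree-two configuration tests for \(2\)--\(0\) moves, while \(3\)--\(3\) and \(4\)--\(2\) moves are bistellar. Consequently every step is a PL homeomorphism of the underlying closed polyhedra.

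Third, the endpoint is compared with \eqref{eq:Qsig} by an explicit relabelling, or equivalently by equality of isomorphism signatures. This gives \(\core\congpl Q\). Theorem~\ref{thm:main} then gives \(\core\congpl S^4\). As a sanity check, and as the driver's final comparison of closed endpoints records, the endpoint can also be matched with the endpoint of the \(K\)-certificate, so that \(\core\congpl K\) as well.

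The main obstacle is practical rather than conceptual: finding the move sequence at all. The proof of Theorem~\ref{thm:main} gives no useful bound on path length, and Burke--Burton--Spreer show that paths out of \(Q\) must climb to at least twelve pentachora. I would first try greedy simplification of \(\core\), using edge collapses and \(4\)--\(2\) moves interleaved with random \(3\)--\(3\) moves and \(2\)--\(0\) cleanups, until the triangulation reaches four pentachora. If it stalls, I would use the near-identical face vectors of \(\core\) and \(K\) to search for a short path from \(\core\) to \(K\) instead, and then compose that path with the existing 137-move certificate. Once a path is found, the logical content is only the PL validity of the guarded moves, which the paper has already stated.
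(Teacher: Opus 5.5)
Your proposal is correct and matches the paper's proof: the paper gives a separately replayed, legality-checked 78-move certificate (40 edge collapses, one \(4\)--\(2\) move, six triangle \(2\)--\(0\) moves, five edge \(2\)--\(0\) moves and 26 \(3\)--\(3\) moves) taking \core{} to exactly \eqref{eq:Qsig}, and then applies Theorem~\ref{thm:main}. Your search discussion is not needed for the proof. Also, the twelve-pentachoron bound of Burke--Burton--Spreer concerns paths from \(Q\) to a standard sphere, not descents from \core{} to \(Q\).
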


\begin{proof}
Its separately replayed 78-move certificate reaches exactly
\eqref{eq:Qsig}. It consists of 40 edge collapses, one
\(4\)--\(2\) move, six triangle \(2\)--\(0\) moves, five edge
\(2\)--\(0\) moves and 26 \(3\)--\(3\) moves. Each move is checked
for legality. Apply Theorem~\ref{thm:main}.
\end{proof}

The raw input is \file{closed_anchor/original_core42_input.json};
the normalized input and certificate are
\file{closed_anchor/original_core42_canonical_facets.json} and
\file{closed_anchor/original_core42_reduction_certificate.json}.
The canonical facet-set SHA-256 is
\begin{quote}\small
\nolinkurl{e010a76015367f6b5558c1aadde3a28262fcb00c3560bb6609305f24a714f80e}.
\end{quote}
The raw input-file SHA-256 is
\begin{quote}\small
\nolinkurl{7a37bf0aa2c334b9830681781e3a16c2506c8db3dc1c3fd31aa0b43d2a3af96f}.
\end{quote}

{\footnotesize
\bibliographystyle{plain}
\bibliography{references}
}

\end{document}